\documentclass[oneside,english]{amsart}
\usepackage[T1]{fontenc}
\usepackage[latin9]{inputenc}
\usepackage{amsthm}
\usepackage{amstext}
\usepackage[all]{xy}

\makeatletter
\numberwithin{equation}{section}
\numberwithin{figure}{section}
\theoremstyle{plain}
\newtheorem{thm}{\protect\theoremname}[section]
  \theoremstyle{plain}
  \newtheorem{lem}[thm]{\protect\lemmaname}

\makeatother

\usepackage{babel}
  \providecommand{\lemmaname}{Lemma}
\providecommand{\theoremname}{Theorem}

\begin{document}

\title{A folk Quillen model structure for operads}

\author{Ittay Weiss}
\begin{abstract}
We establish, by elementary means, the existence of a cofibrantly
generated monoidal model structure on the category of operads. By
slicing over a suitable operad the classical Rezk model structure
on the category of small categories is recovered. 
\end{abstract}

\maketitle

\section{Introduction}

The aim of the article is to present an elementary construction of
a Quillen model structure on the category ${\bf Ope_{\Sigma}}$ of
symmetric operads and on the category ${\bf Ope}$ of non-symmetric
operads where, in each case, every operad is fibrant (and cofibrant).
The existence of this model structure was announced without proof
at the end of Section 8 of \cite{MR2366165}. The same model structure
can be deduced from two more recent, and much more sophisticated,
results. One is the Cisinski-Moerdijk model structure on simplicial
operads \cite{MR3100888} and the other result is Giovanni Caviglia's
\cite{GiovCav}, establishing a model structure on enriched operads.
The non-enriched model structure on operads is relatively simple to
establish and does not require complicated machinery like Quillen's
small object argument. We thus give an explicit and elementary construction
of the model structures, together with explicit generating cofibrations.
In that sense the model structures belong to what are commonly referred
to as folk model structures. We also note that by slicing either operads
category over a suitable operad, one obtains the category ${\bf Cat}$
of small categories. When the model structure on operads is transferred
to the sliced category one recovers Rezk's folk model structure on
${\bf Cat}$. We also show that the model structure on ${\bf Ope_{\Sigma}}$
is a monoidal model structure when considering the Boardman-Vogt tensor
product. 

After briefly recalling the main concepts of Quillen model structures,
operads, and Rezk's model structure on ${\bf Cat}$, the second section
contains the proofs of the main results of this article.

\subsection{Quillen model structures}

A \emph{Quillen model structure }on a category $\mathcal{C}$ is a
specification of three classes of morphisms, called \emph{weak equivalences},
\emph{fibrations}, and \emph{cofibrations,} such that the following
axioms hold. 
\begin{itemize}
\item $\mathcal{C}$ is small complete and small cocomplete.
\item The weak equivalences satisfy the \emph{three for two} property, namely
if $h=g\circ f$ and any two of the three morphisms is a weak equivalence,
then so is the third. 
\item Each of the specified classes of morphisms is closed under retracts.
\item Liftings exist. In more detail, consider a commutative square 
\[
\xymatrix{*++{a}\ar[r]\ar@{^{(}->}[d] & *++{c}\ar@{->>}[d]\\
b\ar[r]\ar@{..>}[ur] & d
}
\]
where the left vertical arrow is a cofibration and the right vertical
arrow is a fibration. If any of these morphisms is a weak equivalence,
then a diagonal dotted arrow exists, making the diagram commutative. 
\item Factorizations exist. In more detail, every morphism $f:a\to b$ can
be factorized as $f=g\circ h$ where $g$ is a trivial fibration (i.e.,
a fibration that is also a weak equivalence) and $h$ is a cofibration,
and $f$ can also be factorized as $f=g'\circ h'$ where $g'$ is
a fibration and $h'$ is a trivial cofibration (i.e., a cofibration
that is also a weak equivalence). 
\end{itemize}
We note that typically the verification of the first three axioms
is straightforward. It is the reconciliation between the liftings
axiom and the factorizations axiom that requires a fine-tuned balance
between the three classes of morphisms.

If the category $\mathcal{C}$ is equipped with a monoidal structure,
then a Quillen model structure on $\mathcal{C}$ with every object
cofibrant is compatible with the tensor product if the following pushout-product
axiom is satisfied. For any two morphisms $F:a\to b$ and $G:a'\to b'$,
consider the diagram
\[
\xymatrix{a\otimes a'\ar[r]^{a\otimes G}\ar[d]^{F\otimes a'} & a\otimes b'\ar[dddr]^{F\otimes b'}\ar[d]\\
b\otimes a'\ar[rrdd]_{b\otimes G}\ar[r] & c\ar[rdd]\\
\\
 &  & b\otimes b'
}
\]
where the square is a pushout, and the corner map $F\wedge G:c\to b\otimes b'$
is the induced one from the pushout. The pushout-product axiom states
that if both $F$ and $G$ are cofibrations, than so is $F\wedge G$,
and, moreover, if any of the given cofibrations is a trivial cofibration,
then $F\wedge G$ is a trivial cofibration. For more details on Quillen
model categories see the classical \cite{MR1361887}, the comprehensive
\cite{MR1944041}, or the recent survey \cite{MR3101400}.

\subsection{Operads}

A \emph{non-symmetric operad }$\mathcal{P}$, also known as a \emph{multicategory},
is a collection ${\rm ob}(\mathcal{P})$ of \emph{objects }and, for
all objects $p_{0},\ldots,p_{n}$, $n\ge0$, a set\emph{ }$\mathcal{P}(p_{1},\ldots,p_{n};p_{0})$,
also called a \emph{hom-set}. The elements in each hom set are referred
to as \emph{morphisms} and are also denoted by $\psi:p_{1},\ldots,p_{n}\to p_{0}$
instead of $\psi\in\mathcal{P}(p_{1},\ldots,p_{n};p_{0}$). There
is further a specified rule for composing morphisms when their domains
and codomains suitably match. In more detail, if $\psi:p_{1},\ldots,p_{n}\to p_{0}$
is a morphism and, for each $1\le i\le n$, $\psi_{i}:q_{1}^{i},\ldots,q_{k_{i}}^{i}\to p_{i}$
is a morphism, then a morphism $\psi\circ(\psi_{1},\ldots,\psi_{n}):q_{1}^{1},\ldots,q_{k_{1}}^{1},\ldots,q_{1}^{n},\ldots,q_{k_{n}}^{n}\to p_{0}$
is designated which is defined to be the composition of the given
morphisms. The composition is required to be associative in the obvious
sense and there are also \emph{identity }morphisms, that is for each
object $p$ there is an identity morphism ${\rm id}_{p}:p\to p$,
which behaves like an identity with respect to the composition. A
\emph{symmetric operad }is an operad where for all $p_{0},\ldots,p_{n}$
and a permutation $\sigma\in\Sigma_{n}$, there is a function $\sigma^{*}:\mathcal{P}(p_{1},\ldots,p_{n};p_{0})\to\mathcal{P}(p_{\sigma(1)},\ldots,p_{\sigma(n)};p_{0})$,
and these functions are required to be compatible with the composition
operation in the obvious manner. For both symmetric and non-symmetric
operads, the notion of a structure preserving operation between operads
is referred to as a \emph{functor}. A functor $F:\mathcal{P}\to\mathcal{Q}$
is thus a function $F:{\rm ob}(\mathcal{P})\to{\rm ob}(\mathcal{Q})$
together with, for all objects $p_{0},\ldots,p_{n}$, a function $F:\mathcal{P}(p_{1},\ldots,p_{n};p_{0})\to\mathcal{Q}(Fp_{1},\ldots,Fp_{n};Fp_{0})$,
where, of course, the composition and identities are to be respected. 

We note that any category $\mathcal{C}$ gives rise to an operad $j_{!}(\mathcal{C})$
where ${\rm ob}(j_{!}(\mathcal{C}))={\rm ob}(\mathcal{C})$. The only
morphisms in $j_{!}(\mathcal{C})$ are unary morphisms, given by $j_{!}(\mathcal{C})(a;b)=\mathcal{C}(a,b)$,
with identities and composition as in $\mathcal{C}$. Obviously, $j_{!}(\mathcal{C})$
is also a symmetric operad in a unique way. We thus obtain two functors
$j_{!}:{\bf Cat}\to{\bf Ope}$ and $j_{!}:{\bf Cat}\to{\bf Ope_{\Sigma}}$.
In the other direction, any operad has an underlying category obtained
by neglecting the non-unary morphisms, giving rise to the functors
$j^{*}:{\bf Ope}\to{\bf Cat}$ and $j^{*}:{\bf Ope_{\Sigma}}\to{\bf Cat}$.
It is easy to see that $j_{!}$ is left adjoint to $j^{*}$. By means
of $j^{*}$, the terminology of category theory applies to operads,
in particular, a morphism in an operad is an isomorphism if the morphism
survives in $j^{*}(\mathcal{P})$ and is an isomorphism there. 

Let $\mathcal{P}$ and $\mathcal{Q}$ be two symmetric operads. The
\emph{Boardman-Vogt tensor product} of these operads is the symmetric
operad $\mathcal{P}\otimes_{BV}\mathcal{Q}$ with ${\rm ob}(\mathcal{P}\otimes_{BV}\mathcal{Q})={\rm ob}(\mathcal{P})\times{\rm ob}(\mathcal{Q})$
given in terms of generators and relations as follows. Let $C$ be
the collection on ${\rm ob}(\mathcal{P})\times{\rm ob}(\mathcal{Q})$
which contains the following generators. For each $q\in{\rm ob}(\mathcal{Q})$
and each morphism $\psi\in\mathcal{P}(p_{1},\cdots,p_{n};p)$, there
is a generator $\psi\otimes_{bv}q$ in $C((p_{1},q),\cdots,(p_{n},q);(p,q))$.
Similarly, for each $p\in{\rm ob}(\mathcal{P})$ and a morphism $\varphi\in\mathcal{Q}(q_{1},\cdots,q_{m};q)$,
there is a generator $p\otimes_{bv}\varphi$ in $C((p,q_{1}),\cdots,(p,q_{m});(p,q))$.
There are five types of relations among the arrows generated by these
generators:
\begin{enumerate}
\item $(\psi\otimes_{bv}q)\circ((\psi_{1}\otimes_{bv}q),\cdots,(\psi_{n}\otimes_{bv}q))=(\psi\circ(\psi_{1},\cdots,\psi_{n}))\otimes_{bv}q$
\item  $\sigma^{*}(\psi\otimes_{bv}q)=(\sigma^{*}\psi)\otimes_{bv}q$
\item $(p\otimes_{bv}\varphi)\circ((p\otimes_{bv}\varphi_{1}),\cdots,(p\otimes_{bv}\varphi_{m}))=p\otimes_{bv}(\varphi\circ(\varphi_{1},\cdots,\varphi_{m}))$
\item $\sigma^{*}(p\otimes_{bv}\varphi)=p\otimes_{bv}(\sigma^{*}\varphi)$
\item $(\psi\otimes_{bv}q)\circ((p_{1}\otimes_{bv}\varphi),\cdots,(p_{n}\otimes_{bv}\varphi))=\sigma_{m,n}^{*}((p\otimes_{bv}\varphi)\circ((\psi,q_{1}),\cdots,(\psi,q_{m})))$
\end{enumerate}
By the relations above we mean every possible choice of morphisms
for which the compositions are defined. The relations of type 1 and
2 ensure that for any $q\in{\rm ob}(\mathcal{P})$, the map $p\mapsto(p,q)$
naturally extends to a map of operads $\mathcal{P}\rightarrow\mathcal{P}\otimes_{BV}\mathcal{Q}$.
Similarly, the relations of type 3 and 4 guarantee that for each $p\in{\rm ob}(\mathcal{P})$,
the map $q\mapsto(p,q)$ naturally extends to a map of operads $\mathcal{Q}\rightarrow\mathcal{P}\otimes_{BV}\mathcal{Q}$.
The relation of type 5 can be pictured as follows. The left hand side
is a morphism in the free operad, represented by the labelled planar
tree
\[
\xymatrix{*{}\ar@{-}[dr]_{(p_{1},q_{1})} &  & *{}\ar@{-}[dl]^{(p_{1},q_{m})} &  & *{}\ar@{-}[dr]_{(p_{n},q_{1})} &  & *{}\ar@{-}[dl]^{(p_{n},q_{m})}\\
\ar@{}[r]|{p_{1}\otimes\varphi} & *{\bullet}\ar@{-}[drr]_{(p_{1},q)} &  &  & \ar@{}[r]|{p_{n}\otimes\varphi} & *{\bullet}\ar@{-}[dll]^{(p_{n},q)}\\
 &  & \ar@{}[r]|{\psi\otimes q} & *{\bullet}\ar@{-}[d]^{(p.q)}\\
 &  &  & *{}
}
\]
while the right hand side is given by the tree 
\[
\xymatrix{*{}\ar@{-}[dr]_{(p_{1},q_{1})} &  & *{}\ar@{-}[dl]^{(p_{n},q_{1})} &  & *{}\ar@{-}[dr]_{(p_{1},q_{m})} &  & *{}\ar@{-}[dl]^{(p_{n},q_{m})}\\
\ar@{}[r]|{\psi\otimes q_{1}} & *{\bullet}\ar@{-}[drr]_{(p,q_{1})} &  &  & \ar@{}[r]|{\psi\otimes q_{m}\,\,\,} & *{\bullet}\ar@{-}[dll]^{(p,q_{m})}\\
 &  & \ar@{}[r]|{p\otimes\varphi} & *{\bullet}\ar@{-}[d]^{(p,q)}\\
 &  &  & *{}
}
\]
before applying $\sigma_{m,n}^{*}$, which is the obvious permutation
that equates the domains of the two morphisms. With the Boardman-Vogt
tensor product, the category ${\bf Ope_{\Sigma}}$ is a closed monoidal
category. For an introduction to operads see \cite{MR2094071} or,
for a source that uses similar notation to the one above (and below),
and, in particular, proves the above made claims, see \cite{MR2742425}.

\subsection{The folk model structure on ${\bf Cat}$}

We briefly recount the definition of the Rezk model structure on ${\bf Cat}$. 
\begin{thm}
\label{thm:folkModelStru}The category ${\bf Cat}$ of small categories
admits a cofibrantly generated cartesian closed Quillen model structure
where:
\begin{itemize}
\item The weak equivalences are the categorical equivalences.
\item The cofibrations are those functors $F:\mathcal{C}\rightarrow\mathcal{D}$
that are injective on objects.
\item The fibrations are those functors $F:\mathcal{C}\rightarrow D$ such
that for any $C\in{\rm ob}(\mathcal{C})$ and each isomorphism $\psi:FC\rightarrow D$
in $\mathcal{D}$, there exists an isomorphism $\phi:C\rightarrow C'$
for which $F\phi=\psi$.
\end{itemize}
\end{thm}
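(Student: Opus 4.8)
The plan is to verify the five model-structure axioms in turn, spending essentially no effort on the first three and concentrating on the interplay between lifting and factorization. The underlying structure is a cofibrantly generated model category, so once I exhibit the generating (trivial) cofibrations the small object argument will hand me the factorizations; but since $\mathbf{Cat}$ is very concrete I prefer to give the factorizations by hand. The weak equivalences are the categorical equivalences, which satisfy three-for-two (this is a standard fact about equivalences of categories), and are closed under retracts because a retract of an equivalence is again essentially surjective and fully faithful. The cofibrations (injective-on-objects functors) and the fibrations (isofibrations, i.e.\ functors with the stated isomorphism-lifting property) are likewise routinely closed under retracts, since injectivity on objects and the isomorphism-lifting property are both preserved by retract diagrams. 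Completeness and cocompleteness of $\mathbf{Cat}$ are classical.

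The heart of the argument is the lifting axiom. Given a commutative square whose left edge $F\colon\mathcal{A}\to\mathcal{B}$ is a cofibration and whose right edge $G\colon\mathcal{C}\to\mathcal{D}$ is a fibration, with one of them a weak equivalence, I construct the diagonal in two stages: first define it on objects, then on morphisms. Consider the case in which the cofibration $F$ is trivial, so $F$ is injective on objects and an equivalence, hence fully faithful and essentially surjective. For an object $b\in\mathcal{B}$ choose an object in the image of $F$ isomorphic to $b$; using that $G$ is an isofibration I lift the comparison isomorphism through $G$ to define the diagonal on $b$, and on objects already in the image of $F$ I am forced by commutativity to use the given data. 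Full faithfulness of $F$ then transports the morphism assignment uniquely, and one checks functoriality and commutativity of the two triangles. The dual case, where $G$ is a trivial fibration (an equivalence that is an isofibration), is handled symmetrically: now $G$ is fully faithful and surjective-on-objects enough to define the lift on the objects not hit by $F$, while injectivity of $F$ on objects guarantees these choices do not clash with the forced values.

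For factorizations I use the standard mapping-cylinder construction adapted to $\mathbf{Cat}$. Given $F\colon\mathcal{A}\to\mathcal{B}$, I build a factorization $\mathcal{A}\hookrightarrow\mathcal{A}' \twoheadrightarrow \mathcal{B}$ through the category whose objects are those of $\mathcal{A}$ together with a chosen object of $\mathcal{B}$ for each object of $\mathcal{B}$ equipped with an isomorphism to something in the image, arranged so that the first leg is injective on objects (a cofibration) and the second leg is an isofibration that is a categorical equivalence. Reversing the roles of the isomorphism-data gives the trivial-cofibration/fibration factorization. Concretely, the trivial cofibration into the ``iso-path'' category $\mathcal{B}^{\mathbf{I}}$ pulled back along $F$ yields the required intermediate object; this is the categorical incarnation of the path-object factorization.

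I expect the main obstacle to be the bookkeeping in the lifting axiom: verifying that the object-level choices (forced on the image of $F$, freely chosen elsewhere via isomorphism lifting) assemble into a genuine functor that makes both triangles commute, rather than merely commuting up to isomorphism. The delicate point is that $F$ being injective on objects is exactly what prevents a clash between forced and freely-chosen values, while $G$ being an isofibration supplies the lifts on the nose rather than only up to isomorphism; reconciling these two strictness requirements with the up-to-isomorphism nature of equivalences is where the ``fine-tuned balance'' the introduction mentions must be checked carefully.
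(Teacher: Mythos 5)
Your verification of the model-structure axioms follows essentially the same route the paper takes: the paper actually states this theorem without proof (it is recalled as Rezk's folk model structure), but its detailed proof of the operadic generalization in Section 2 is exactly your argument restricted to unary morphisms. Your two lifting cases (trivial cofibration: choose a quasi-inverse, lift the comparison isomorphisms through the isofibration, conjugate; trivial fibration: lift on objects using surjectivity and injectivity, then use full faithfulness of $G$ to define the lift on morphisms) and your two factorizations (the iso-comma/path-object category for trivial cofibration followed by fibration, the mapping cylinder for cofibration followed by trivial fibration) are precisely the paper's constructions, including the key normalizations --- the comparison isomorphism chosen to be the identity on the image of $F$, and injectivity on objects preventing a clash between forced and free values.

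However, the statement you are proving claims more than the existence of the model structure: it asserts that the structure is \emph{cofibrantly generated} and \emph{cartesian closed} (i.e.\ compatible with the cartesian product via the pushout-product axiom), and your proposal establishes neither. You gesture at generating cofibrations but never exhibit them; for $\mathbf{Cat}$ one needs an explicit set, namely $\emptyset\rightarrow\ast$, the inclusion of the discrete two-object category into the arrow category $[1]$, and the collapse of the parallel-pair category onto $[1]$ (with $\ast\rightarrow H$, the free-living isomorphism, generating the trivial cofibrations), together with the verification that right lifting against these characterizes the (trivial) fibrations --- this is the content of the paper's final theorems, adapted to arity one. The pushout-product axiom is not addressed at all; it requires checking that the corner map of a pushout-product of two injective-on-objects functors is injective on objects (using that ${\rm ob}$ preserves colimits and products) and that tensoring a trivial cofibration with an identity yields an equivalence, as the paper does for the Boardman--Vogt tensor product. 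These are genuine omissions relative to the full statement, though neither requires a new idea beyond what you have set up.
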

The model structure is cofibrantly generated and compatible with the
cartesian product of categories. 

Recently, in \cite{BMhomThCat}, conditions are given for the existence
of a canonical model structure on the category of small categories
enriched in a monoidal category $\mathcal{V}$. The Rezk model structure
is recovered by the case $\mathcal{V}={\bf Set}$.

\section{The model structure on operads}

We now present the main result: an elementary presentation of a Quillen
model structure on operads. 
\begin{thm}
\label{thm:folkModelStrOnOperad}The categories ${\bf Ope}$ and ${\bf Ope_{\Sigma}}$
admit a Quillen model structure where:
\begin{itemize}
\item The weak equivalences are the operadic equivalences, namely those
functors $F:\mathcal{P}\to\mathcal{Q}$ which are essentially surjective
(i.e., such that $j^{*}(F)$ is essentially surjective) and fully
faithful (i.e., where each component function $F:\mathcal{P}(p_{1},\ldots,p_{n};p_{0})\to\mathcal{Q}(Fp_{1},\ldots,Fp_{n};Fp_{0})$
is a bijection). 
\item The cofibrations are those functors $F:\mathcal{P}\rightarrow\mathcal{Q}$
that are injective on objects.
\item The fibrations are those functors $F:\mathcal{P}\rightarrow\mathcal{Q}$
such that for any $p\in{\rm ob}(\mathcal{P})$ and each isomorphism
$\psi:Fp\rightarrow q$ in $\mathcal{Q}$, there exists an isomorphism
$\phi:p\rightarrow p'$ for which $F\phi=\psi$. 
\end{itemize}
\end{thm}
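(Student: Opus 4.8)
The plan is to follow, \emph{mutatis mutandis}, the classical proof of the folk model structure on $\mathbf{Cat}$ (Theorem \ref{thm:folkModelStru}), replacing categorical hom-sets by the multi-ary hom-sets $\mathcal{P}(p_{1},\ldots,p_{n};p_{0})$ throughout. The three routine axioms present no difficulty. Small completeness and cocompleteness of $\mathbf{Ope}$ and $\mathbf{Ope_{\Sigma}}$ are standard, both being categories of algebras for a finitary monad on a presheaf category. For the two-out-of-three property I would first record that an operadic equivalence $F\colon\mathcal{P}\to\mathcal{Q}$ admits a quasi-inverse: choosing for each $q\in\mathrm{ob}(\mathcal{Q})$ an object $Gq$ and an isomorphism $\epsilon_{q}\colon F(Gq)\to q$ (essential surjectivity), full faithfulness lets one transport the hom-sets of $\mathcal{Q}$ back along the $\epsilon_{q}$ to define $G$ on morphisms, yielding natural isomorphisms $GF\cong\mathrm{id}$ and $FG\cong\mathrm{id}$; the operadic equivalences are then exactly the equivalences of the $2$-category of operads, for which two-out-of-three is standard. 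Closure under retracts is equally formal: bijectivity on each hom-set and essential surjectivity are retract-stable, injectivity on objects is retract-stable in $\mathbf{Set}$, and the fibrations, being the maps with the right lifting property against the inclusion of an endpoint into $j_{!}$ of the free-living-isomorphism category, form a retract-closed class.

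The heart of the matter is the reconciliation of lifting and factorization, and here two characterizations are useful: a \emph{trivial fibration} is precisely a functor that is surjective on objects and fully faithful (essential surjectivity together with isomorphism-lifting upgrades to surjectivity on objects, while conversely full faithfulness reflects isomorphisms), whereas a \emph{trivial cofibration} is a functor that is injective on objects, essentially surjective, and fully faithful. For a lifting problem with a cofibration $i\colon\mathcal{A}\to\mathcal{B}$ on the left and a trivial fibration $p\colon\mathcal{X}\to\mathcal{Y}$ on the right, I would build the diagonal $h\colon\mathcal{B}\to\mathcal{X}$ directly: on objects in the image of $i$ the value is forced by the given square, while on the remaining objects (there is a genuine choice precisely because $i$ is injective on objects) surjectivity of $p$ on objects supplies preimages; full faithfulness of $p$ then determines $h$ on every hom-set uniquely, and functoriality together with commutativity of the two triangles follows from that uniqueness.

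The main obstacle is the opposite lifting, where the left map $i\colon\mathcal{A}\to\mathcal{B}$ is a trivial cofibration and the right map $p\colon\mathcal{X}\to\mathcal{Y}$ is merely a fibration; this is where isomorphism-lifting must do real work. Writing $u\colon\mathcal{A}\to\mathcal{X}$ and $v\colon\mathcal{B}\to\mathcal{Y}$ for the edges of the square, I would choose for each $b\in\mathrm{ob}(\mathcal{B})$ an object $\rho(b)\in\mathrm{ob}(\mathcal{A})$ and an isomorphism $\theta_{b}\colon i\rho(b)\to b$, normalized so that $\rho(ia)=a$ and $\theta_{ia}=\mathrm{id}$ on the image of $i$. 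Applying $v$ to $\theta_{b}$ and lifting the resulting isomorphism through the fibration $p$ (whose source is $u\rho(b)$ since $vi=pu$) produces an object $h(b)$ with an isomorphism $\xi_{b}\colon u\rho(b)\to h(b)$ satisfying $p(\xi_{b})=v(\theta_{b})$, taking $\xi_{ia}=\mathrm{id}$ on the image of $i$ so that $h$ agrees with $u$ there. To define $h$ on a morphism $\psi\colon b_{1},\ldots,b_{n}\to b_{0}$, I would transport it into $\mathcal{A}$ via full faithfulness of $i$, writing $\theta_{0}^{-1}\circ\psi\circ(\theta_{1},\ldots,\theta_{n})=i(\bar\psi)$, apply $u$, and conjugate back by the chosen $\xi$'s, setting
\[
h(\psi)=\xi_{0}\circ u(\bar\psi)\circ(\xi_{1}^{-1},\ldots,\xi_{n}^{-1}).
\]
A short computation using $pu=vi$ gives $p\circ h=v$, the normalization gives $h\circ i=u$, and functoriality (and, in the symmetric case, $\Sigma$-equivariance of $h$) follows because the conjugating isomorphisms cancel in composites; the only real subtlety is the bookkeeping of the unary isomorphisms $\xi_{b}$ through operadic composition.

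Finally I would produce the two factorizations by explicit elementary constructions. For the (cofibration, trivial fibration) factorization of $f\colon\mathcal{A}\to\mathcal{B}$, let $\mathcal{E}$ have object set $\mathrm{ob}(\mathcal{A})\sqcup\mathrm{ob}(\mathcal{B})$ together with the evident surjection $\pi$ to $\mathrm{ob}(\mathcal{B})$ sending $a\mapsto fa$ and $b\mapsto b$, and hom-sets pulled back from $\mathcal{B}$ along $\pi$; the inclusion of the $\mathcal{A}$-summand is injective on objects while the projection to $\mathcal{B}$ is surjective on objects and fully faithful by construction, and their composite is $f$. For the (trivial cofibration, fibration) factorization I would use the operadic mapping path object: let $\mathcal{E}$ have as objects triples $(a,b,\beta)$ with $a\in\mathrm{ob}(\mathcal{A})$, $b\in\mathrm{ob}(\mathcal{B})$ and $\beta\colon fa\to b$ an isomorphism, and hom-sets $\mathcal{E}((a_{1},b_{1},\beta_{1}),\ldots;(a_{0},b_{0},\beta_{0}))=\mathcal{A}(a_{1},\ldots,a_{n};a_{0})$. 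Then $a\mapsto(a,fa,\mathrm{id})$ is a trivial cofibration, and the projection $(a,b,\beta)\mapsto b$, which sends a morphism $\psi$ to $f(\psi)$ conjugated by the $\beta$'s, is a fibration whose required isomorphism lifts are supplied tautologically by varying the coordinate $\beta$. Both constructions carry the symmetric structure along verbatim, completing the verification in $\mathbf{Ope_{\Sigma}}$ as well.
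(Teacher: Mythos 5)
Your proposal is correct and follows essentially the same route as the paper: the same characterizations of trivial (co)fibrations, the same two lifting arguments (inverting hom-set bijections against a trivial fibration, and conjugating by chosen isomorphism lifts against a fibration), and the same two explicit factorizations via the disjoint-union-of-objects operad and the operadic mapping path object. The only divergence is your appeal to finitary monads for cocompleteness where the paper constructs coproducts and coequalizers directly, which is immaterial.
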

\begin{proof}
We treat both symmetric and non-symmetric operads together since the
symmetric group actions entail no complications. Thus we provide the
details for the non-symmetric operads noting at this point that whenever
one needs to extend the construction to include symmetric group actions,
the extension is the evident one. Notice that a functor $F:\mathcal{P}\rightarrow\mathcal{Q}$
is a fibration (respectively cofibration) if, and only if, $j^{*}F$
is a fibration (respectively cofibration) in the Rezk model structure.
Notice as well that a functor $F:\mathcal{P}\rightarrow\mathcal{Q}$
is a trivial fibration if, and only if, the function ${\rm ob}(F):{\rm ob}(\mathcal{P})\rightarrow{\rm ob}(\mathcal{Q})$
is surjective and $F$ is fully faithful. We now set out to prove
the Quillen axioms. Small limits, in both ${\bf Ope}$ and ${\bf Ope_{\Sigma}}$
are directly constructed much as they are constructed in the category
${\bf Cat}$, posing no difficulties. Showing the existence of small
colimits, namely of all small coproducts and all coequalizers, requires
some more care. Constructing small coproducts is trivial, but, viewing
operads as an extension of categories by means of $j_{!}$, operads
inherit the subtleties of categories. The details of coequalizers
of categories can be found in \cite{MR1725510} and, mutatis-mutandis,
the same construction gives rise to coequalizers of operads (both
symmetric and non-symmetric). The verification of the three for two
property and of closure under retracts is routine, and we thus turn
now to a detailed proof of the liftings and the factorizations axioms. 

Consider a commutative square
\[
\xymatrix{*++{\mathcal{P}}\ar[r]^{U}\ar@{^{(}->}[d]_{F} & *++{\mathcal{R}}\ar@{->>}[d]^{G}\\
\mathcal{Q}\ar[r]_{V}\ar@{..>}[ur]^{H} & \mathcal{S}
}
\]
where $F$ is a cofibration and $G$ is a fibration. We need to prove
the existence of a lift $H$ making the diagram commute, whenever
$F$ or $G$ is a weak equivalence. Assume first that $G$ is a weak
equivalence. Applying the object functor (that sends an operad $\mathcal{P}$
to the set ${\rm ob}(\mathcal{P})$) to the lifting diagram we obtain
\[
\xymatrix{*++{{\rm ob}(\mathcal{P})}\ar[r]^{U}\ar@{->}[d]_{F} & *++{{\rm ob}(\mathcal{R})}\ar@{->}[d]^{G}\\
{\rm ob}(\mathcal{Q})\ar[r]_{V}\ar@{..>}[ur]^{H} & {\rm ob}(\mathcal{S})
}
\]
where $F$ is injective and $G$ is surjective. We can thus find a
lift $H$. Let now $\psi\in\mathcal{Q}(q_{1},\cdots,q_{n};q)$, and
consider $V(\psi)\in\mathcal{S}(Vq_{1},\cdots,Vq_{n};Vq)$. Since
$G$ is fully faithful and $HG=V$ on the level of objects, we obtain
that the function 
\[
G:\mathcal{R}(Hq_{1},\cdots,Hq_{n};Hq)\rightarrow\mathcal{S}(Vq_{1},\cdots,Vq_{n};Vq)
\]
is an isomorphism. We now define $H(\psi)=G^{-1}(V(\psi))$. It is
easily checked that this (uniquely) extends $H$ and makes it into
the desired lift.

Assume now that $F$ is a trivial cofibration. We can thus construct
a functor $F':\mathcal{Q}\rightarrow\mathcal{P}$ such that 
\[
F'\circ F={\rm id}_{\mathcal{P}}
\]
 together with a natural isomorphism $\alpha:F\circ F'\rightarrow{\rm id}_{\mathcal{Q}}$
(the theory of natural transformations from a single functor to another
functor between operads is almost identical to that of natural transformations
between categories). We can moreover choose $\alpha$ such that for
each $p\in{\rm ob}(\mathcal{P})$, the component at $Fp$ is given
by
\[
\alpha_{Fp}={\rm id}_{Fp}.
\]
To define $H:{\rm ob}(\mathcal{Q})\rightarrow{\rm ob}(\mathcal{R})$,
let $q\in{\rm ob}(\mathcal{Q})$ and consider the object $VFF'q\in{\rm ob}(\mathcal{S})$.
Since
\[
VFF'q=GUF'q
\]
it follows from the definition of fibration that there is an object
$H(q)$ and an isomorphism
\[
\beta_{q}:UF'q\rightarrow Hq
\]
in $\mathcal{R}$ such that
\[
GHq=Vq
\]
and 
\[
G\beta_{q}=V\alpha_{q}.
\]
We can also choose $\beta$ so as to assure  that for every $p\in{\rm ob}(\mathcal{P})$
\[
HFp=Up
\]
and 
\[
\beta_{Fp}={\rm id}_{Up}.
\]
Let now $\psi\in\mathcal{Q}(q_{1},\cdots,q_{n};q)$ and define $H(\psi)$
to be the composition of the following composition scheme in $\mathcal{R}$:
\[
\xymatrix{*{}\ar@{-}[d]_{Hq_{1}} & \ar@{}[d]|{\cdots} & *{}\ar@{-}[d]^{Hq_{n}}\\
*{\bullet}\ar@{-}[dr]_{UF'q_{1}} & \ar@{}[r]|{\beta_{q_{n}}^{-1}}\ar@{}[l]|{\beta_{q_{1}}^{-1}} & *{\bullet}\ar@{-}[dl]^{UF'q_{n}}\\
\ar@{}[r]|{UF'\psi} & *{\bullet}\ar@{-}[d]^{UF'q}\\
\ar@{}[r]|{\quad\beta_{q}} & *{\bullet}\ar@{-}[d]^{Hq}\\
 & *{}
}
\]
The resulting $H$ is easily seen to be a functor, and the desired
lift. 

For the axiom on factorizations, let $F:\mathcal{P}\rightarrow\mathcal{Q}$
be a functor. We first construct a factorization of $F$ into a trivial
cofibration followed by a fibration. Construct first the following
operad $\mathcal{P}'$ with
\[
{\rm ob}(\mathcal{P}')=\{(p,\varphi,q)\in{\rm ob}(\mathcal{P})\times\mathcal{Q}(Fp,q)\times{\rm ob}(\mathcal{Q})\mid\varphi\textrm{ is an isomorphism}\}
\]
and, for objects $(p_{1},\varphi_{1},q_{1}),\cdots,(p_{n},\varphi_{n},q_{n}),(p,\varphi,q)$,
the arrows 
\[
\mathcal{P}'((p_{1},\varphi_{1},q_{1}),\ldots,(p_{n},\varphi_{n},q_{n});(p,\varphi,q))=\mathcal{P}(p_{1},\cdots,p_{n};p)
\]
with the obvious operadic structure. If we now define $G:\mathcal{P}\rightarrow\mathcal{P}'$
on objects $p\in{\rm ob}(\mathcal{P})$ by 
\[
G(p)=(p,{\rm id}_{Fp},Fp)
\]
and for any morphism $\psi\in\mathcal{P}(p_{1},\ldots,p_{n};p)$ by
\[
G(\psi)=\psi
\]
we evidently get a functor, which is clearly a trivial cofibration.
We now define the functor $H:\mathcal{P}'\rightarrow Q$ on objects
$(p,\varphi,q)\in{\rm ob}(\mathcal{P}')$ by 
\[
H(p,\varphi,q)=q
\]
and on an arrow $\psi\in\mathcal{P}'((p_{1},\varphi_{1},q_{1}),\ldots,(p_{n},\varphi_{n},q_{n});(p,\varphi,q))$
to be the composition of the composition scheme
\[
\xymatrix{*{}\ar@{-}[d]_{q_{1}} & \ar@{}[d]|{\cdots} & *{}\ar@{-}[d]^{q_{n}}\\
*{\bullet}\ar@{-}[dr]_{Fp_{1}} & \ar@{}[r]|{\varphi_{n}^{-1}}\ar@{}[l]|{\varphi_{1}^{-1}} & *{\bullet}\ar@{-}[dl]^{Fp_{n}}\\
\ar@{}[r]|{F\psi} & *{\bullet}\ar@{-}[d]^{Fp}\\
\ar@{}[r]|{\quad\varphi} & *{\bullet}\ar@{-}[d]^{q}\\
 & *{}
}
\]
Clearly, $H$ is a fibration since if $f:H(p,\varphi,q)\rightarrow q'$
is an isomorphism in $\mathcal{Q}$, then $(p,f\varphi,q')$ is also
an object of $\mathcal{Q}$ and ${\rm id_{p}}$ is an isomorphism
in $\mathcal{P}'$ from $(p,\varphi,q)$ to $(p,f\varphi,q')$ which,
by definition, maps under $H$ to $f\varphi\circ F({\rm id}_{p})\circ\varphi^{-1}=f$.
Since we obviously have that $F=H\circ G$ we have the desired factorization. 

We now proceed to prove that $F$ can be factored as a composition
of a cofibration followed by a trivial fibration. Let $\mathcal{Q}'$
be the operad with 
\[
{\rm ob}(\mathcal{Q}')={\rm ob}(\mathcal{P})\coprod{\rm ob}(\mathcal{Q})
\]
and with arrows defined as follows. Given an object $x\in{\rm ob}(\mathcal{Q}')$
let (somewhat ambiguously)
\[
Fx=\left\{ \begin{array}{cc}
x, & \textrm{if }x\in{\rm ob}(\mathcal{Q}),\\
Fx, & \textrm{if }x\in{\rm ob}(\mathcal{P}).
\end{array}\right.
\]
Now, for objects $x_{1},\cdots,x_{n},x\in{\rm ob}(\mathcal{Q}')$
let 
\[
\mathcal{Q}'(x_{1},\cdots,x_{n};x)=\mathcal{Q}(Fx_{1},\cdots,Fx_{n};Fx).
\]
The operad structure is the evident one. If we now define a functor
$G:\mathcal{P}\rightarrow Q'$ for an object $p\in{\rm ob}(\mathcal{P})$
and an arrow $\psi\in\mathcal{P}(p_{1},\cdots,p_{n};p)$ by 
\[
Gp=p
\]
 and 
\[
G\psi=F\psi,
\]
then we obviously obtain a cofibration. We now define $H:\mathcal{Q}'\rightarrow\mathcal{Q}$
as follows. Given an object $x\in{\rm ob}(\mathcal{Q}')$, if $x\in{\rm ob}(\mathcal{P})$,
then we set $Hx=Fx$ and if $x\in{\rm ob}(\mathcal{Q})$, then we
set $Hx=x$ (thus in our slightly ambiguous notation we have that
$Hx=Fx$). Given an arrow $\psi\in\mathcal{Q}'(x_{1},\ldots,x_{n};x)$,
defining $H\psi=\psi$ makes $H$ into a functor, clearly full and
faithful. Moreover $H$ is a fibration as can easily be seen. Since
obviously $F=H\circ G$, the proof is complete. Note that all operads
are both fibrant and cofibrant under this model structure. 
\end{proof}
The Boardman-Vogt tensor product is of extreme importance in the theory
of operads. It only exists for symmetric operads, since the interchange
axiom can not be stated for non-symmetric operads, and without it
the resulting tensor product is, in a sense, too large, and not very
useful. The model structure on symmetric operads, as we now show,
is compatible with the Boardman-Vogt tensor product. 
\begin{thm}
The monoidal category ${\bf Ope_{\Sigma}}$ with the Boardman-Vogt
tensor product and the model structure defined above is a monoidal
model category. \end{thm}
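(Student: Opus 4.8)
Because the preceding theorem exhibits every object of $\mathbf{Ope_{\Sigma}}$ as cofibrant, the only thing left to verify is the pushout-product axiom. The plan is to exploit the closed structure on $(\mathbf{Ope_{\Sigma}},\otimes_{BV})$ through the standard two-variable adjunction: writing $F\wedge G$ for the pushout-product corner map and, for a cofibration $G\colon a'\to b'$ and a functor $p\colon\mathcal{R}\to\mathcal{S}$, writing $\langle G,p\rangle$ for the pullback-hom $\underline{\mathrm{Hom}}(b',\mathcal{R})\to\underline{\mathrm{Hom}}(a',\mathcal{R})\times_{\underline{\mathrm{Hom}}(a',\mathcal{S})}\underline{\mathrm{Hom}}(b',\mathcal{S})$, one has that $F\wedge G$ has the left lifting property against $p$ if and only if $F$ has the left lifting property against $\langle G,p\rangle$. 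I will use this to reduce every lifting question about $F\wedge G$ to a recognisable question about the internal hom.

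I would first dispose of the cofibration half. The object functor $\mathrm{ob}\colon\mathbf{Ope_{\Sigma}}\to\mathbf{Set}$ preserves colimits, being left adjoint to the functor that equips a set with the chaotic operad structure (exactly one operation of each arity), and on objects $\otimes_{BV}$ is the cartesian product. Since $-\times X$ preserves colimits in $\mathbf{Set}$, applying $\mathrm{ob}$ to the pushout defining $F\wedge G$ identifies $\mathrm{ob}(F\wedge G)$ with the set-level pushout-product of the two injections $\mathrm{ob}(F)$ and $\mathrm{ob}(G)$. This corner map is the inclusion of the subset of $\mathrm{ob}(b)\times\mathrm{ob}(b')$ consisting of those pairs with first coordinate in $\mathrm{ob}(a)$ or second coordinate in $\mathrm{ob}(a')$; in particular it is injective. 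Hence $F\wedge G$ is injective on objects, i.e.\ a cofibration, whenever $F$ and $G$ are.

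The trivial half is the substance of the proof, and rests on the following claim: if $G$ is a cofibration and $p$ is a fibration then $\langle G,p\rangle$ is a fibration. To prove this I would use that the objects of $\underline{\mathrm{Hom}}(\mathcal{P},\mathcal{Q})$ are the functors $\mathcal{P}\to\mathcal{Q}$ and that the isomorphisms of this operad are exactly the natural isomorphisms of functors (the operadic analogue of the functor category, furnished by the closed structure). To lift an isomorphism, suppose given a functor $T\colon b'\to\mathcal{R}$, a natural isomorphism $\eta\colon T\circ G\Rightarrow U$, and a natural isomorphism $\zeta\colon p\circ T\Rightarrow W$ with $p\circ\eta=\zeta\circ G$, representing an isomorphism out of $\langle G,p\rangle(T)$ in the pullback. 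I would define a natural isomorphism $\theta\colon T\Rightarrow\widetilde{T}$ componentwise: for $x=G(x')$ in the image of $G$ (where $x'$ is unique because $G$ is injective on objects) set $\theta_x=\eta_{x'}$, and for $x$ outside the image use that $p$ is a fibration to lift the isomorphism $\zeta_x\colon pTx\to Wx$ to an isomorphism $\theta_x\colon Tx\to\widetilde{T}x$ in $\mathcal{R}$ with $p\theta_x=\zeta_x$. Transporting the operadic structure of $T$ along $\theta$ defines $\widetilde{T}$ on operations and makes $\theta$ natural; by construction $\theta$ restricts to $\eta$ along $G$ and maps to $\zeta$ under $p$, so it is the required lift. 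With the claim in hand the trivial half follows formally: a trivial cofibration $F$ has the left lifting property against every fibration, hence against every $\langle G,p\rangle$, so $F\wedge G$ has the left lifting property against every fibration $p$ and is therefore a trivial cofibration; the remaining case, $F$ a cofibration and $G$ a trivial cofibration, follows from the symmetry isomorphism $F\wedge G\cong G\wedge F$.

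The main obstacle is the claim about $\langle G,p\rangle$, and within it the precise bookkeeping of the internal hom: one must know that the objects of $\underline{\mathrm{Hom}}(\mathcal{P},\mathcal{Q})$ are functors and its isomorphisms are natural isomorphisms, and one must check that transport of operadic structure along a natural isomorphism again yields a functor making $\theta$ natural. Everything else — the two-variable adjunction, the object-level computation of the cofibration half, and the passage from lifting properties to the trivial cofibration conclusion — is routine.
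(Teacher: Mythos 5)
Your proposal is correct, and while the cofibration half coincides with the paper's (both apply the object functor, use that it preserves colimits and that $\otimes_{BV}$ is the cartesian product on objects, and observe that the set-level pushout-product of injections is injective), the trivial half takes a genuinely different route. The paper argues directly on the tensor side: it verifies that $F\otimes_{BV}\mathcal{P}'$ is an operadic equivalence when $F$ is a trivial cofibration (essentially because a trivial cofibration admits a retraction together with a natural isomorphism, and $-\otimes_{BV}\mathcal{P}'$ preserves both), invokes stability of trivial cofibrations under cobase change to conclude that the pushout leg into $\mathcal{K}$ is a trivial cofibration, and finishes with three-for-two applied to $F\otimes_{BV}\mathcal{Q}'=(F\wedge G)\circ H$. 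You instead pass to the adjoint formulation via the closed structure, reducing everything to the claim that the pullback-hom $\langle G,p\rangle$ of a cofibration against a fibration is again a fibration, which you prove by the componentwise construction of a natural isomorphism (using $\eta$ on the image of $G$, which is unambiguous by injectivity on objects, and the isofibration property of $p$ elsewhere) followed by transport of structure; your verification that the transported functor restricts to $U$ along $G$ and projects to $W$ under $p$ does go through by naturality of $\eta$ and $\zeta$. The trade-off is that your argument needs a concrete description of the internal hom of $\otimes_{BV}$ (objects are functors, isomorphisms are natural isomorphisms) --- a real input, though it is exactly the content of the closed monoidal structure the paper cites --- together with the retract-argument characterization of trivial cofibrations as the maps with the left lifting property against all fibrations; the paper's argument needs instead that $-\otimes_{BV}\mathcal{P}'$ sends trivial cofibrations to equivalences and the same cobase-change stability. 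Your version also yields the symmetric case ($G$ trivial) cleanly from the symmetry of $\otimes_{BV}$, a case the paper leaves implicit, and the fibration statement about $\langle G,p\rangle$ is a slightly stronger, reusable conclusion than the paper extracts.
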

\begin{proof}
Since all objects are cofibrant we only have to prove that given two
cofibrations $\xymatrix{F:\mathcal{P}\ar@{^{(}->}[r] & \mathcal{Q}}
$ and $\xymatrix{G:\mathcal{P}'\ar@{^{(}->}[r] & \mathcal{Q}'}
$, the push-out corner map $F\wedge G$ in the diagram
\[
\xymatrix{\mathcal{P}\otimes_{BV}\mathcal{P}'\ar[r]^{\mathcal{P}\otimes_{BV}G}\ar[d]^{F\otimes_{BV}\mathcal{P}'} & \mathcal{P}\otimes_{BV}\mathcal{Q}'\ar[dddr]^{F\otimes_{BV}\mathcal{Q}'}\ar[d]\\
\mathcal{Q}\otimes_{BV}\mathcal{P}'\ar[rrdd]_{Q\otimes_{BV}G}\ar[r] & \mathcal{K}\ar[rdd]_{F\wedge G}\\
\\
 &  & \mathcal{Q}\otimes_{BV}\mathcal{Q}'
}
\]
is a cofibration which is a trivial cofibration if $F$ is a trivial
cofibration. Since  ${\rm ob}(\mathcal{P}\otimes_{BV}\mathcal{Q})={\rm ob}(\mathcal{P})\times{\rm ob}(\mathcal{Q})$
and since ${\rm ob}:{\bf Ope_{\Sigma}}\rightarrow{\bf Set}$ commutes
with colimits, applying the functor ${\rm ob}$ to the diagram above
we obtain the diagram
\[
\xymatrix{{\rm ob}(\mathcal{P})\times{\rm ob}(\mathcal{P}')\ar[r]^{\mathcal{P}\times G}\ar[d]^{F\times\mathcal{P}'} & {\rm ob}(\mathcal{P})\times{\rm ob}(\mathcal{Q}')\ar[d]^{H}\ar[dddr]^{F\times\mathcal{Q}'}\\
{\rm ob}(\mathcal{Q})\times{\rm ob}(\mathcal{P}')\ar[r]\ar[rrdd]_{\mathcal{Q}\times G} & {\rm ob}(\mathcal{K})\ar[ddr]_{F\wedge G}\\
\\
 &  & {\rm ob}(\mathcal{Q})\times{\rm ob}(\mathcal{Q}')
}
\]
which is again a pushout. We are given that $F$ and $G$ are injective,
from which it follows that $F\times\mathcal{P}'$ and $\mathcal{P}\times G$
are also injective. It is now easy to verify that $F\wedge G$ is
injective as well which proves that the operad map $F\wedge G:\mathcal{K}\rightarrow\mathcal{Q}\otimes_{BV}\mathcal{Q}'$
is a cofibration. Assume now that $F$ in the first diagram is also
a weak equivalence, i.e., an operadic equivalence. It is trivial to
verify that $F\otimes_{BV}\mathcal{P}'$ is also an equivalence. Thus
$F\times\mathcal{P}'$ is a trivial cofibration. Since trivial cofibrations
are stable under cobase change it follows that $H$ is a trivial cofibration.
Since $F\times\mathcal{Q}'$ is also an equivalence, the three for
two property implies that $F\wedge G$ is a trivial cofibration. 
\end{proof}
Consider now ${\bf Cat}$ with the Rezk model structure and the categories
${\bf Ope}$ and ${\bf Ope_{\Sigma}}$ with the model structure given
above. Recall the adjunction between categories and operads. 
\begin{lem}
The adjunctions $\xymatrix{{\bf Ope}\ar@<-2pt>[r]_{j^{*}} & {\bf Cat}\ar@<-2pt>[l]_{j_{!}}}
$ and $\xymatrix{{\bf Ope_{\Sigma}}\ar@<-2pt>[r]_{\quad j^{*}} & {\bf Cat}\ar@<-2pt>[l]_{\quad j_{!}}}
$ are Quillen adjunctions.\end{lem}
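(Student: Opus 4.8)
The plan is to use the standard criterion that an adjunction $L\dashv R$ with left adjoint $L$ is a Quillen adjunction as soon as the right adjoint $R$ preserves fibrations and trivial fibrations. Since $j_{!}$ is the left adjoint and $j^{*}$ the right adjoint in each of the two adjunctions, it therefore suffices to show that $j^{*}$ carries fibrations to fibrations and trivial fibrations to trivial fibrations. Moreover, because $j^{*}$ simply forgets all non-unary morphisms (and, in the symmetric case, the symmetric group actions, which are anyhow trivial on unary morphisms), the argument is verbatim the same for ${\bf Ope}$ and for ${\bf Ope_{\Sigma}}$, so I would write it once.

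First I would record that $j^{*}$ preserves fibrations. This is immediate from the observation made at the start of the proof of Theorem~\ref{thm:folkModelStrOnOperad}: a functor $F$ of operads is a fibration precisely when $j^{*}F$ is a fibration in the Rezk model structure. Indeed, the fibration condition---lifting of a prescribed isomorphism against a chosen object---is a condition purely on the underlying categories, on which $F$ and $j^{*}F$ literally agree. Hence $F$ a fibration forces $j^{*}F$ to be a fibration.

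Next I would treat trivial fibrations by invoking the explicit characterization, also recorded in that proof, that a functor $F\colon\mathcal{P}\to\mathcal{Q}$ is a trivial fibration if and only if ${\rm ob}(F)$ is surjective and $F$ is fully faithful. Now ${\rm ob}(j^{*}F)={\rm ob}(F)$ is the same function, hence still surjective, while the action of $j^{*}F$ on hom-sets is exactly the action of $F$ on the \emph{unary} hom-sets, which are bijections since $F$ is fully faithful. Thus $j^{*}F$ is a functor of categories that is surjective on objects and fully faithful, which is precisely a trivial fibration in the Rezk model structure (such a functor is a categorical equivalence, and, being surjective on objects and fully faithful, it lifts isomorphisms). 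This yields preservation of trivial fibrations and completes the verification that each adjunction is a Quillen adjunction.

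I do not expect a genuine obstacle here: the entire point is that fibrations and trivial fibrations in both model structures are detected solely on the underlying-category data, which $j^{*}$ preserves on the nose. The only points needing care are bookkeeping ones---confirming that the condition is being checked on the correct (right) adjoint, and remembering that the non-unary hom-sets of $j_{!}(\mathcal{C})$ are empty, so nothing is lost or created when comparing $F$ on unary homs with $j^{*}F$. One could equivalently verify directly that the left adjoint $j_{!}$ preserves cofibrations and trivial cofibrations, using that $j_{!}$ is the identity on objects and agrees with the original functor on the only nonempty (unary) hom-sets; I would mention this as an alternative but carry out the $j^{*}$ version as the shorter one.
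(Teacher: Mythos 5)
Your proof is correct and matches the paper's, which simply invokes the same standard criterion (in the dual form: it suffices that the left adjoint $j_{!}$ preserves cofibrations and trivial cofibrations) and then observes that in fact both $j^{*}$ and $j_{!}$ preserve fibrations, cofibrations, and weak equivalences outright. Your more detailed verification on the right-adjoint side, via the characterization of trivial fibrations as functors surjective on objects and fully faithful, is exactly the content the paper leaves as ``trivial to verify.''
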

\begin{proof}
It is enough to prove that $j_{!}$ preserves cofibrations and trivial
cofibrations. Actually it is trivial to verify the much stronger property
that both $j^{*}$ and $j_{!}$ preserve fibrations, cofibrations,
and weak equivalences. 
\end{proof}
We end our treatment of the model structure of operads with the following
explicit construction of generating cofibrations. 
\begin{thm}
The model structures on ${\bf Ope}$ and on ${\bf Ope_{\Sigma}}$
are cofibrantly generated. \end{thm}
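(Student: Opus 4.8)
The plan is to exhibit explicit sets $I$ of generating cofibrations and $J$ of generating trivial cofibrations and to verify that a functor is a trivial fibration precisely when it has the right lifting property against $I$, and a fibration precisely when it has the right lifting property against $J$. Since the domains of all the generators I will use are finite operads, and since ${\bf Ope}$ and ${\bf Ope_{\Sigma}}$ are locally finitely presentable, the requisite smallness conditions hold automatically, so these two lifting characterizations are exactly what the statement amounts to. Throughout I will lean on two facts already available: that ${\bf Cat}$ is cofibrantly generated (Theorem \ref{thm:folkModelStru}), and that a functor $G$ of operads is a fibration, cofibration, or weak equivalence exactly when $j^{*}G$ is one in ${\bf Cat}$ (the preceding Lemma). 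I will also use the characterization from the proof of Theorem \ref{thm:folkModelStrOnOperad} that the trivial fibrations are the functors that are surjective on objects and fully faithful.

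For the generating trivial cofibrations I take the single map $J=\{\,j_{!}(\ast\to E)\,\}$, where $E$ is the walking-isomorphism category and $\ast\to E$ is the inclusion of one object, this being the standard generating trivial cofibration of the folk structure on ${\bf Cat}$. For the generating cofibrations I set $I=\{\,j_{!}(\iota)\mid\iota\in I_{{\bf Cat}}\,\}\cup\{\,\partial\mathcal{C}_{n}\hookrightarrow\mathcal{C}_{n},\ \mathcal{P}_{n}\to\mathcal{C}_{n}\mid n\ge 0\,\}$, where $I_{{\bf Cat}}$ is a set of generating cofibrations for ${\bf Cat}$ furnished by Theorem \ref{thm:folkModelStru}. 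Here $\partial\mathcal{C}_{n}$ is the operad with objects $0,1,\ldots,n$ and no non-identity morphisms; $\mathcal{C}_{n}$ is the free operad obtained by adjoining a single $n$-ary operation $\mu:1,\ldots,n\to 0$; and $\mathcal{P}_{n}$ is the free operad on two parallel such operations $\mu_{0},\mu_{1}$. The first map is the identity on objects, and the second collapses $\mu_{0},\mu_{1}$ to $\mu$. Each of these is injective on objects, hence a cofibration, and $j_{!}(\ast\to E)$ is moreover an operadic equivalence.

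The verification of the lifting characterizations then splits along arity. By adjunction, $G$ has the right lifting property against $j_{!}(k)$ if and only if $j^{*}G$ has it against $k$; applying this to $J$ and to the summand $\{\,j_{!}(\iota)\,\}$ of $I$, and invoking Theorem \ref{thm:folkModelStru} together with the Lemma, shows that lifting against $J$ is equivalent to $G$ being an isofibration, i.e. a fibration, and that lifting against $\{\,j_{!}(\iota)\,\}$ forces $G$ to be surjective on objects and fully faithful on unary hom-sets. It remains to read off the corolla generators. A lifting problem against $\partial\mathcal{C}_{n}\hookrightarrow\mathcal{C}_{n}$ selects objects $r_{0},\ldots,r_{n}$ of the source and an operation $\beta\in\mathcal{S}(Gr_{1},\ldots,Gr_{n};Gr_{0})$, and a solution is exactly a preimage of $\beta$ in $\mathcal{R}(r_{1},\ldots,r_{n};r_{0})$, so lifting against all of these says that $G$ is full in every arity. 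A lifting problem against $\mathcal{P}_{n}\to\mathcal{C}_{n}$ presents two operations $\alpha_{0},\alpha_{1}\in\mathcal{R}(r_{1},\ldots,r_{n};r_{0})$ with $G\alpha_{0}=G\alpha_{1}$, and a solution exists if and only if $\alpha_{0}=\alpha_{1}$, so lifting against all of these says that $G$ is faithful in every arity. Combining the three families, lifting against $I$ is equivalent to $G$ being surjective on objects and fully faithful, i.e. a trivial fibration, as required.

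The main point requiring care is the construction and bookkeeping of the corolla generators, in particular checking that $\mathcal{C}_{n}$ is genuinely free on one operation so that no spurious composites interfere with the lifting analysis, and that in the symmetric case the $\Sigma_{n}$-action is handled correctly: there $\mathcal{C}_{n}$ carries the whole orbit $\{\,\sigma^{*}\mu\,\}$, but equivariance means a lift is still determined by a single preimage, so the same arguments yield fullness and faithfulness componentwise. With $\Sigma_{n}$-equivariance in place the symmetric case presents no further obstacle, and since factorizations have already been produced by hand in Theorem \ref{thm:folkModelStrOnOperad}, there is no need to invoke the small object argument.
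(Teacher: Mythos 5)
Your proposal is correct and follows essentially the same route as the paper: you detect fibrations by lifting against $j_{!}$ of the walking isomorphism (the paper's free living isomorphism operad $H$), and trivial fibrations by lifting against the corolla inclusions ($\partial Ar_{n}\rightarrow Ar_{n}$ in the paper's notation) for fullness, the parallel-pair collapses ($PAr_{n}\rightarrow Ar_{n}$) for faithfulness, and a generator forcing surjectivity on objects. The only cosmetic difference is that you package the object-surjectivity and unary generators as $j_{!}$ of the generating cofibrations of ${\bf Cat}$, where the paper uses $\phi\rightarrow\star$ directly; the two sets have the same lifting class.
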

\begin{proof}
Again, the symmetric group actions pose no difficulties, and so we
give the details in the non-symmetric case; the symmetric case obtained
by symmetrizing. Let $\star$ be the operad with one object and just
one arrow (necessarily the identity, and notice that $\star$ can
be considered as a symmetric or a non-symmetric operad) and let $H$
be the free living isomorphism operad, which has two objects and,
besides the necessary identities, just one isomorphism between the
two objects. It is a triviality to check that a functor $F:\mathcal{P}\rightarrow\mathcal{Q}$
is a fibration if, and only if,  it has the right lifting property
with respect to (any one of the two possible functors) $*\rightarrow H$. 

To characterize the trivial fibrations by right lifting properties
we will need to consider several other operads. First of all, it is
clear that if a functor $F:\mathcal{P}\rightarrow\mathcal{Q}$ has
the right lifting property with respect to $\phi\rightarrow\star$,
then $F:{\rm ob}(\mathcal{P})\rightarrow{\rm ob}(\mathcal{Q})$ is
surjective (where $\phi$ is the initial operad with no objects).
For each $n\ge1$ consider the operad $Ar_{n}$ that has $n+1$ objects
$\{0,1,\ldots,n\}$ and is generated by a single arrow from $(1,\ldots,n)$
to $0$. Thus a functor $Ar_{n}\rightarrow\mathcal{P}$ is just a
choice of an arrow in $\mathcal{P}$ of arity $n$. Let $\partial Ar_{n}$
be the sub-operad of $Ar_{n}$ that contains all the objects of $Ar_{n}$
but only the identity arrows. It now easily follows that if a functor
$F:\mathcal{P}\rightarrow\mathcal{Q}$ has the right lifting property
with respect to the inclusion $\partial Ar_{n}\rightarrow Ar_{n}$,
then for any objects $p_{1},\cdots,p_{n},p\in{\rm ob}(\mathcal{P})$,
the function 
\[
F:\mathcal{P}(p_{1},\ldots,p_{n};p)\rightarrow\mathcal{Q}(Fp_{1},\ldots,Fp_{n};Fp)
\]
is surjective. Consider now the operad $PAr_{n}$ with $n+1$ objects
$\{0,1,\ldots,n\}$ generated by two distinct arrows from $(1,\ldots,n)$
to $0$ and the obvious map $PAr_{n}\rightarrow Ar_{n}$ which identifies
these two arrows. If a functor $F:\mathcal{P}\rightarrow\mathcal{Q}$
has the right lifting property with respect to $PAr_{n}\rightarrow Ar_{n}$,
then the map 
\[
F:\mathcal{P}(p_{1},\ldots,p_{n};p)\rightarrow\mathcal{Q}(Fp_{1},\ldots,Fp_{n};Fp)
\]
 is injective. Combining these results we see that if a functor $F:\mathcal{P}\rightarrow\mathcal{Q}$
has the right lifting property with respect to the set of functors
\[
\{\phi\rightarrow\star\}\cup\{\partial Ar_{n}\rightarrow Ar_{n}\mid n\ge0\}\cup\{PAr_{n}\rightarrow Ar_{n}\mid n\ge0\},
\]
then $F$ is fully faithful and $F:{\rm ob}(\mathcal{P})\rightarrow{\rm ob}(\mathcal{Q})$
is surjective, which implies that $F$ is a trivial fibration. Finally,
since all the functors just mentioned are cofibrations it follows
that all trivial fibrations have the right lifting property with respect
to them. This then proves that the trivial fibrations are exactly
those functors having the right lifting property with respect to that
set. 
\end{proof}
Finally, we recover the Rezk model structure by exhibiting ${\bf Cat}$
as a slice category of ${\bf Ope}$ as well as of ${\bf Ope_{\Sigma}}$.
Consider again $\star$, the operad with one object and only one morphism,
necessarily the identity morphism. 
\begin{thm}
The slice category ${\bf Ope}/\star$ (resp. ${\bf Ope_{\Sigma}}/\star$)
is isomorphic to ${\bf Cat}$ and slicing the model structure on operads
yields the Rezk model structure on categories. \end{thm}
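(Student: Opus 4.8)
The plan is to prove the statement in two stages: first establish the isomorphism of underlying categories ${\bf Ope}/\star\cong{\bf Cat}$, and then invoke the standard slice model structure together with the compatibility of $j_{!}$ and $j^{*}$ with the three classes that was recorded in the preceding Lemma.

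First I would analyze functors into $\star$. Since $\star$ has a single object and its only arrow is the unary identity, its hom-sets of arity $n\neq1$ are all empty. Consequently a functor $F:\mathcal{P}\to\star$ can exist only when $\mathcal{P}$ has no morphisms of arity $\neq1$, i.e.\ only when $\mathcal{P}$ has only unary morphisms; and when such an $F$ exists it is forced on objects (everything goes to the unique object) and on unary arrows (everything goes to ${\rm id}$), hence it is unique. Thus the objects of ${\bf Ope}/\star$ are exactly the operads with only unary morphisms, each carrying its unique structure map, and these are precisely the operads of the form $j_{!}(\mathcal{C})$. Because the structure maps to $\star$ are unique, a morphism $(\mathcal{P},!)\to(\mathcal{Q},!)$ in the slice is just an operad functor $\mathcal{P}\to\mathcal{Q}$ with no commutativity constraint. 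I would then check that $j^{*}$ and $j_{!}$ restrict to mutually inverse functors between ${\bf Ope}/\star$ and ${\bf Cat}$: for $\mathcal{P}$ with only unary morphisms one has $j_{!}(j^{*}(\mathcal{P}))=\mathcal{P}$ on the nose, while $j^{*}(j_{!}(\mathcal{C}))=\mathcal{C}$ always, so the unit and counit are identities and we obtain an \emph{isomorphism} of categories, not merely an equivalence. The symmetric case is identical, using the one-object symmetric operad $\star$, whose only symmetric action is the trivial $\Sigma_{1}$-action.

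Next, for the model structure, I would recall the classical fact that every model category $\mathcal{M}$ and object $X$ yield an induced model structure on the slice $\mathcal{M}/X$, in which a slice morphism is a weak equivalence, fibration, or cofibration precisely when its underlying $\mathcal{M}$-morphism is. Applying this to ${\bf Ope}$ with $X=\star$, the three classes in ${\bf Ope}/\star$ are created by the forgetful functor to ${\bf Ope}$, and it remains only to verify that under the isomorphism above they match the Rezk classes. Here the preceding Lemma does the work: since $j^{*}(j_{!}(\mathcal{C}))=\mathcal{C}$ and $j_{!}$ is the identity on objects, a functor $G:\mathcal{C}\to\mathcal{D}$ is injective on objects iff $j_{!}(G)$ is (cofibrations match); is a categorical equivalence iff $j_{!}(G)$ is an operadic equivalence (the only nontrivial homs of $j_{!}(\mathcal{C})$ being unary, where bijectivity is exactly full faithfulness, while essential surjectivity is preserved because $j^{*}j_{!}={\rm id}$); and satisfies the Rezk isofibration condition iff $j_{!}(G)$ does (the isomorphisms of $j_{!}(\mathcal{C})$ being exactly those of $\mathcal{C}$). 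Hence the three sliced classes transport exactly onto the Rezk weak equivalences, fibrations, and cofibrations.

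I expect no serious obstacle; the single point demanding care is the very first observation that $\star$ has empty higher-arity hom-sets, which is precisely what collapses the slice onto categories and forces the structure map to be unique. Everything downstream---the identification of slice morphisms with functors, the invocation of the generic slice model structure, and the class-by-class matching---is then routine and rests entirely on the already-proved fact that $j_{!}$ and $j^{*}$ preserve and reflect all three classes.
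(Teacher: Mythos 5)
Your proposal is correct and follows essentially the same route as the paper: observe that $\star$ has empty hom-sets in every arity other than one, so the slice objects are exactly the unary-only operads with their unique structure map, identify the slice with ${\bf Cat}$ via $j^{*}$ and $j_{!}$, and match the three classes. You simply spell out the details (uniqueness of the structure map, the standard slice model structure, the class-by-class comparison) that the paper dismisses as immediate.
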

\begin{proof}
The objects of ${\bf Ope}/\star$ are functors $F:\mathcal{P}\to\star$.
Since a functor must preserve arities of morphisms, and $\star$ only
has one unary morphism, if follows that $\mathcal{P}$ only has unary
morphisms. It is now easy to see that $F\mapsto j^{*}(\mathcal{P})$
is an isomorphism of categories between ${\bf Ope}/\star$ and ${\bf Cat}$.
The claim about the model structures is immediate and the argument
for symmetric operads is similar. 
\end{proof}
\bibliographystyle{plain}
\bibliography{/Users/ittayweiss/Documents/PapersInProgress/generalReferences}

\end{document}